\newcommand{\cx}{{\mathbb C}}
\newcommand{\rank}{\operatorname{rank}}
\newcommand{\ol}{\overline}
\numberwithin{equation}{section}
\newtheorem{theorem}{Theorem}[section]
\newtheorem{theorema}{Theorem}
\newtheorem{lemma}[theorem]{Lemma}
\newtheorem{proposition}[theorem]{Proposition}
\theoremstyle{remark}
\newtheorem{remark}[theorem]{Remark}
\newcommand{\oC}{{\mathbb{C}}}
\newcommand{\oP}{{\mathbb{P}}}
\newcommand{\oQ}{{\mathbb{Q}}}
\newcommand{\oR}{{\mathbb{R}}}
\newcommand{\oZ}{{\mathbb{Z}}}
\newcommand{\sO}{{\mathcal{O}}}
\begin{document}

\title{Nonnegative polynomials from vector bundles on real curves}
\author{Roger Bielawski}
\address{Institut f\"ur Differentialgeometrie\\
Universit\"at Hannover\\ Welfengarten 1\\ D-30167 Hannover}


\begin{abstract} We observe that the $E$-resultant of a very ample rank $2$ vector bundle $E$ on a real projective curve (with no real points) is nonnegative when restricted to the space of real sections. Moreover, we show that if $E$ has a section vanishing at exactly two points and
the degree $d$ of $E$ satisfies $d(d-6)\geq 4(g-1)$, then this polynomial cannot be written as a sum of squares.
\end{abstract}

\subjclass[2000]{14H60, 13J30, 14P05}
\maketitle

\thispagestyle{empty}

\section{Introduction}

Ever since Hilbert's seminal 1888 paper \cite{Hil} it has been known that there exist real polynomials in $n$ variables ($n\geq 3$), which are nonnegative on all of $\oR^n$, but cannot be written as a sum of squares of polynomials. Hilbert's proof was nonconstructive and the first explicit example was found only in 1967. The subject of nonnegative polynomials has become a rapidly evolving area of real algebraic geometry, with links and applications to
semidefinite programming and  polynomial optimization problems: see, e.g., the  monograph \cite{Mar}.
\par
In this note, we are going to show that a class of nonnegative polynomials, which cannot be written as sums of squares, corresponds to very ample rank $2$ vector bundles on real projective curves with no real points (the vector bundles must satisfy some additional constraints; see below). The simplest vector bundle satisfying the necessary conditions is $\sO(3)\oplus \sO(3)$ on $\cx \oP^1$ equipped with the antipodal map, and we compute explicitly the corresponding nonnegative polynomial which is not a sum of squares  (Example 3.3). It is of degree $6$ in $8$ variables, with $224$ terms. 
\par
Let us explain the correspondence between vector bundles and polynomials in greater detail.
Let $E$ be a globally generated nontrivial vector bundle of rank $2$ on a smooth connected projective curve $C$ (over $\oC$). The cone 
$$ V(E)=\{s\in H^0(C,E);\; \exists_{z\in C}\; s(z)=0\}
$$
of sections of $E$ which vanish somewhere is an irreducible hypersurface of $H^0(C,E)$, and hence the zero set of an irreducible homogeneous polynomial $R_E$.  Following Gelfand, Kapranov, and Zelevinsky \cite[Ch. 3]{GKZ}, we call $R_E$ the {\em $E$-resultant}. 
\par
Suppose now, in addition, that $C$ is a real curve, i.e. $C$ is equipped with an antiholomorphic involution $\sigma$, and that $E$ is a $\sigma$-bundle, i.e. there is an antilinear bundle isomorphism $\tau:E\to E$ which covers $\sigma$ and satisfies $\tau^2=1$.  We can then restrict $R_E$ to the real vector space $H^0(C,E)^\tau$ of $\tau$-invariant sections and obtain (after multiplying by a constant) a real polynomial $\rho_E$ on $H^0(C,E)^\tau$.
\par 
The real polynomial $\rho_E$ is the subject of this note. It is of course determined only up to a nonzero multiple, and we can assume that $\rho_E$ takes at least one positive value.
Our main result is:

\begin{theorema} Let $C$ be a  smooth connected projective curve of genus $g$, equipped with an antiholomorphic involution $\sigma$ with no fixed points. Let $E$ be a rank $2$ and degree $d$ very ample $\sigma$-bundle on $C$. Then:
\begin{itemize}
 \item[(i)] $\rho_E$ is a nonnegative homogeneous polynomial of degree $d$;
\item[(ii)] $\rho_E$ vanishes on a set of codimension $2$, unless  $E$ is $\sO(1)\oplus \sO(1)$ on $\oP^1$. At a generic point of $\rho_E^{-1}(0)$, the rank of the Hessian of $\rho_E$  is $4$.
\item[(iii)] If $d(d-6)\geq 4(g-1)$ and $E$ has a section which vanishes at exactly two points,
then $\rho_E$ cannot be written as a sum of squares of polynomials.
\end{itemize}\label{one}
\end{theorema}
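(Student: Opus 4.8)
The plan is to argue by contradiction, combining the local second-order data of part (ii) with a global Riemann--Roch estimate on $C$. Suppose that $\rho_E=\sum_{i}q_i^2$ with each $q_i$ a real form of degree $d/2$ on $H^0(C,E)^\tau$. Since every square is nonnegative, each $q_i$ vanishes on the real zero set $Z=\rho_E^{-1}(0)$, and at any $s_0\in Z$ one has the identity
\[
\Hess\rho_E(s_0)=2\sum_i \nabla q_i(s_0)\otimes\nabla q_i(s_0),
\]
so that the image of $\Hess\rho_E(s_0)$ is exactly the span of the gradients $\nabla q_i(s_0)$. By part (ii), at a generic $s_0\in Z$ this image is $4$-dimensional. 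This is where I would invoke the hypothesis that $E$ has a section vanishing at exactly two points: it guarantees that the generic point of $Z$ is represented by a section $s_0$ with simple zeros along a single conjugate pair $\{z_0,\sigma z_0\}$, so that the local geometry of $V(E)$ at $s_0$ is controlled by the transverse meeting of the two sheets $S_{z_0}$ and $S_{\sigma z_0}=\tau(S_{z_0})$. This identifies the $4$-dimensional span of the $\nabla q_i(s_0)$ with the real points of the $\tau$-conjugate pair of conormal directions to these two sheets.

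First I would make this identification precise along the incidence curve. To a pair $\{z,\sigma z\}$ one attaches the conormal line to the sheet $S_z\subset V(E)$ at the corresponding section; as $z$ ranges over $C$ these lines assemble into a line subbundle of the trivial bundle $H^0(C,E)^*\otimes\sO_C$, twisted by $\det E\otimes K_C^{-1}$ arising from differentiating the condition $s(z)=0$ along $C$. The constraint extracted above---that each $\nabla q_i(s_0)$ lies in this conormal span, while the $q_i$ have degree $d/2$---then says precisely that the collection $(q_i)$ determines a global section over $C$ of an explicit bundle $\sF$ built from $E$, $\det E$, $K_C$ and $\sO(d/2)$. The reality condition $\tau^2=1$ together with the positivity $\rho_E\ge 0$ forces this section to be \emph{nonzero}: it is exactly the datum recording how the two conjugate conormal directions pair in the rank-$4$ Hessian, and it cannot vanish identically without making $\Hess\rho_E$ drop below rank $4$ on an open subset of $Z$, contradicting (ii).

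Next I would run the dimension count. Projecting onto the relevant rank-one piece, the section produced above yields a nonzero section of a line bundle whose degree, once the twists by $\det E$ (degree $d$), $K_C$ (degree $2g-2$) and $\sO(d/2)$ are accounted for, equals $2g-2-\tfrac12 d(d-6)$. When $d(d-6)\ge 4(g-1)$ this degree is nonpositive: in the strict range it is negative, so the bundle has no nonzero global section at all; in the boundary case $d(d-6)=4(g-1)$ the degree is $0$, and the section, being forced to vanish at the loci where $s_0$ acquires a further zero, still cannot exist. In either case the nonzero section of the previous paragraph is impossible, so $\rho_E$ is not a sum of squares.

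The main obstacle, and the step demanding the most care, is the middle one: pinning down the bundle $\sF$ on $C$ so that the degree bookkeeping produces exactly $\tfrac12 d(d-6)$, and, crucially, verifying that the sum-of-squares hypothesis genuinely forces the resulting section to be nonzero. This is precisely where the assumption of a section with exactly two zeros does the work, since it keeps the incidence curve reduced and the two sheets transverse, so that the conormal construction is non-degenerate and the rank-$4$ statement of (ii) translates into the non-vanishing of a section rather than into a weaker, satisfiable condition.
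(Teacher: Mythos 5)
Your opening observation is sound: on $Z=\rho_E^{-1}(0)$ every $q_i$ vanishes, so $\Hess\rho_E(s_0)=2\sum_i\nabla q_i(s_0)\otimes\nabla q_i(s_0)$ and part (ii) pins the span of the gradients at $4$ dimensions. But the argument then rests entirely on a construction you never carry out, and which I do not believe can be carried out as described. The ``section of a bundle $\sF$ on $C$'' is asserted, not built: the gradients $\nabla q_i(s_0)$ depend on the point $s_0$, and $Z$ fibres over $C/\sigma$ with fibres $V(E)^\tau_x$ of positive dimension, along which $\nabla q_i$ varies polynomially of degree $d/2-1$; there is no canonical way to collapse this data to a single section of a line bundle over the curve $C$, and consequently the degree $2g-2-\tfrac12 d(d-6)$ is reverse-engineered from the desired conclusion rather than derived (you concede as much when you call this ``the main obstacle''). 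The non-vanishing claim (``positivity forces this section to be nonzero'') and the treatment of the boundary case $d(d-6)=4(g-1)$ are likewise unsupported. You also misplace the hypothesis that $E$ has a section with exactly two zeros: for a $\tau$-invariant section, vanishing at $x$ automatically forces vanishing at $\sigma(x)$, so the generic point of $Z$ vanishes on a single conjugate pair without that hypothesis; it is needed elsewhere (see below). Finally, the statement to be proved is the full theorem, and your proposal proves nothing about (i) or (ii), which it instead consumes as input.

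For contrast, the paper's route for (iii) complexifies rather than working along $Z$: if $\rho_E=\sum p_i^2$ then $R_E=\sum p_i^2$ as complex polynomials, and the common zero locus $P=\{p_1=\dots=p_m=0\}$ in $H^0(C,E)$ lies in the singular locus of $\oP V(E)$, which is shown to be $\oP V^{[2]}(E)$, the (irreducible, codimension $2$) locus of sections with at least two zeros. Since $V(E)^\tau\subset P$ already has codimension $2$ (Lemma \ref{codim=2}), irreducibility forces $P=V^{[2]}(E)$; a Bezout bound using $\deg p_i\le d/2$ and $m\ge 3$ gives $\deg V^{[2]}(E)<(d/2)^2$. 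The exact degree $\deg V^{[2]}(E)=\tfrac{d(d-3)}{2}+1-g$ is then computed from $c_2(E^{[2]})$ on $C_2$ via Mattuck's Chern character and Geertsen's degeneracy-locus formula --- and this is precisely where the two-zeros hypothesis enters, guaranteeing that the generic section in $V(E^{[2]})$ has exactly one zero on $C_2$ so that $c_2(E^{[2]})[C_2]$ really is the degree. The inequality $\tfrac{d(d-3)}{2}+1-g<(d/2)^2$ is equivalent to $d(d-6)<4(g-1)$, yielding the contradiction. Your second-order local analysis, even if completed, would have to reproduce an equivalent global degree computation; as it stands, the middle step of your proposal is a genuine gap, not a technicality.
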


\begin{remark}  A large class of curves equipped with an antiholomorphic involution with no fixed points is obtained by considering curves in noncompact complex
surfaces equipped with such an involution, e.g. the total space of $\sO(2k)$ on $\oP^1$, or $\oP^1\times \ol{\oP^1}$ minus the diagonal.
\end{remark}
\begin{remark}
 In (ii) the codimension means the topological codimension (equivalently, the codimension of the real algebraic set $Z(\rho_E)$ in $H^0(C,E)^\tau$).
\par
The second statement can be rephrased as follows: $\rho_E$ divides every $5\times 5$ minor, but not every $4\times 4$ minor, of the Hessian of $\rho_E$.\label{remarkii}
\end{remark}
\begin{remark}
 If $E$ is nonspecial, i.e. $h^1(E)=0$, then the numerical condition $d(d-6)\geq 4(g-1)$ is automatically satisfied if $g\geq 1$ (and if $g=0$, $d\geq 6$). Indeed, using the Riemann-Roch theorem for vector bundles (see, e.g., \cite[Corollary III.12]{Beau}) we can write  $d=2(g-1)+n$, $n=h^0(E)$. It follows   that $d(d-6)\geq 4(g-1)$ if $n\geq 6$. The case $n<6$ cannot occur. Indeed, $n$ must be even (since $d$ is even, owing to $\sigma$ having no fixed points) and there are no very ample rank $2$ vector bundles with $h^0(E)=2,4$ on curves of positive genus (see, e.g., the proof of Lemma \ref{codim=2} below). 
\end{remark}
\begin{remark}  The condition that $E$ has a section vanishing at exactly two points is satisfied, for example, if $E$ is $2$-very ample, i.e. the evaluation map $H^0(C,E)\to H^0(C,E\otimes\sO_D)$ is surjective for any effective divisor $D$ of degree $3$.
\end{remark}

\section{Proof of Theorem \ref{one}}

We shall first prove  statements (i) and (ii). The fact that $\deg \rho_E =\deg E$ follows from Theorem 3.10 in Chapter 3 of \cite{GKZ}.
The following lemma proves the first claim in statement (ii). 
\begin{lemma} Let $E$ be a   rank $2$  very ample $\sigma$-bundle on a real projective curve $C$ with no real points. Then the codimension of $V(E)^\tau$ is two, unless  $E$ is $\sO(1)\oplus \sO(1)$ on $\oP^1$. \label{codim=2}
\end{lemma}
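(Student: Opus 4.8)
The plan is to realise $V(E)^\tau$ as a ruled family of linear subspaces and to read off its dimension. The starting observation is that a $\tau$-invariant section $s$ satisfies $s(\sigma(z))=\tau(s(z))$, so if $s$ vanishes at a point $z$ it also vanishes at $\sigma(z)$; since $\sigma$ has no fixed points, $z\neq\sigma(z)$. Hence every real section that vanishes somewhere vanishes along a conjugate pair, and I would write
$$ V(E)^\tau=\bigcup_{[z]\in C/\sigma} Z_z,\qquad Z_z:=\{s\in H^0(C,E)^\tau: s(z)=0\}=H^0\bigl(C,E(-z-\sigma(z))\bigr)^\tau, $$
the base $C/\sigma$ being a compact (nonorientable) real surface of dimension $2$. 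Because the divisor $z+\sigma(z)$ is $\sigma$-invariant, $E(-z-\sigma(z))$ is again a $\sigma$-bundle and $\dim_\oR Z_z=h^0\bigl(E(-z-\sigma(z))\bigr)$.

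The key input from very ampleness is that for any two distinct points $z\neq w$ the evaluation $H^0(C,E)\to E_z\oplus E_w$ is surjective. I would prove this geometrically: the very ample embedding $\oP(E)\hookrightarrow \oP(H^0(C,E)^*)$ restricts on each fibre to the complete linear system of $\sO(1)$, sending the fibres over $z$ and over $w$ to two lines spanning the $2$-planes $\ev_z^*(E_z^*)$ and $\ev_w^*(E_w^*)$ in $H^0(C,E)^*$; injectivity of the embedding forces these lines to be disjoint, i.e. the two $2$-planes to meet only in $0$, which is exactly surjectivity of the two-point evaluation $\ev_{z,w}$. Applying this to the pair $z,\sigma(z)$ gives $\dim_\oR Z_z=h^0(E)-4$ for \emph{every} $z$, so the incidence space $\widetilde V=\{([z],s):s\in Z_z\}$ is a real vector bundle of rank $h^0(E)-4$ over $C/\sigma$, of total dimension $h^0(E)-2$. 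Since $V(E)^\tau$ is the image of $\widetilde V$ under $([z],s)\mapsto s$, this yields at once $\dim V(E)^\tau\le h^0(E)-2$.

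For the reverse inequality I would use that a nonzero section of a rank $2$ bundle on a curve has a finite (possibly empty) zero locus, so for $s\neq 0$ the fibre $\{[z]:s(z)=0\}$ of the projection $\widetilde V\to V(E)^\tau$ is finite. Provided some $Z_z\neq 0$, i.e. $h^0(E)\ge 5$, the connected bundle $\widetilde V$ has nonzero elements, the projection is generically finite onto its image, and therefore $\dim V(E)^\tau=\dim\widetilde V=h^0(E)-2$, the asserted codimension $2$. It then remains to treat $h^0(E)\le 4$. The cases $h^0(E)\le 3$ cannot occur, since the very ample embedding would place the surface $\oP(E)$ nondegenerately in $\oP^{h^0(E)-1}$, impossible for a geometrically ruled surface when $h^0(E)\le 3$ (there is no surface in $\oP^1$, and a nondegenerate surface in $\oP^2$ is $\oP^2$ itself, which is not a $\oP^1$-bundle, its Picard rank being $1$). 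In the remaining case $h^0(E)=4$ every $Z_z$ is zero, so $V(E)^\tau=\{0\}$ and the codimension is \emph{not} $2$; here I would identify $E$ by observing that $\oP(E)$ is a smooth geometrically ruled surface sitting nondegenerately in $\oP^3$, that such a surface has degree $\ge 2$, and that the only smooth surface in $\oP^3$ fibred in a family of pairwise disjoint lines is the quadric $\oP^1\times\oP^1$; this forces $C\cong\oP^1$ and $E\cong\sO(1)\oplus\sO(1)$.

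The main obstacle, and the step I would spend the most care on, is this exceptional case: one must genuinely show that the quadric is the \emph{only} possibility when $h^0(E)=4$, ruling out positive genus and higher-degree scrolls, rather than merely exhibiting it as one possibility (this is also what underlies the Remark's assertion that no very ample rank $2$ bundles with $h^0(E)=2,4$ exist in positive genus). The two-point evaluation statement is the other load-bearing ingredient, but once the disjoint-fibres picture in $\oP(E)\hookrightarrow\oP(H^0(C,E)^*)$ is set up it follows immediately; the dimension bookkeeping in between is routine.
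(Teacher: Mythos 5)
Your argument for the main case $h^0(E)>4$ is essentially the paper's own: the observation that a $\tau$-invariant section vanishing at $z$ vanishes at $\sigma(z)\neq z$, the incidence space over $C/\sigma$ with fibres $H^0(E(-z-\sigma(z)))^\tau$ of real dimension $n-4$, and finiteness of the fibres of the second projection to get $\dim_\oR V(E)^\tau=n-2$. You add two pieces of rigor the paper leaves implicit: a correct geometric derivation of two-point evaluation surjectivity (disjointness of the fibre-lines of $\oP(E^\ast)\hookrightarrow\oP(H^0(C,E)^\ast)$ is indeed dual to surjectivity of $\ev_{z,w}$, and only distinct points $z\neq\sigma(z)$ are needed), and the explicit exclusion of $h^0(E)\le 3$. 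Where you genuinely diverge is the exceptional case $h^0(E)=4$: you work with the scroll $\oP(E^\ast)\subset\oP^3$, whereas the paper works with the dual object, the resultant surface $S=\oP V(E)\subset\oP^3$ ruled by the lines $S_x=\oP\{s:\,s(x)=0\}$, citing Beauville for $b_2(S)=2$ and concluding $\deg R_E=2$, so $S$ is a quadric. Your route is equally viable, but the statement you flag and leave unproved --- that the quadric is the only smooth surface in $\oP^3$ fibred in pairwise disjoint lines --- is precisely the load-bearing classification, so as written the proposal stops one step short. It closes in two lines: by the Lefschetz hyperplane theorem a smooth surface $S\subset\oP^3$ has $q(S)=0$, so the base curve of the ruling is $\oP^1$ and $S$ is geometrically ruled with $K_S^2=8$; on the other hand $K_S=(d-4)H$ for a smooth degree-$d$ surface in $\oP^3$, whence $d(d-4)^2=8$ and $d=2$. (Equivalently, $b_2(S)=2$ forces $d=2$ via $b_2=d^3-4d^2+6d-2$, which is in effect the computation behind the paper's citation.) Then $S$ is the quadric, $C\cong\oP^1$ and $E\cong\sO(1)\oplus\sO(1)$, completing your version of the argument. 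With that fact supplied, the proof is correct; the trade-off is that the paper's route gets the degree of $R_E$ directly from $b_2$ of the ruled resultant surface, while yours keeps all the geometry on the scroll side, where very ampleness and nondegeneracy (the embedding is by the complete system $|\sO(1)|$ since $h^0(\oP(E^\ast),\sO(1))=h^0(E)=4$) are immediate.
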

\begin{remark}
 Here, and in what follows, the codimension of a real algebraic set is its topological codimension. 
\end{remark}

\begin{proof}
 Suppose first that $h^0(E)=4$. Then $S=\oP V(E)$ is a geometrically ruled (by the $S_x=\oP\{s\in H^0(E);\, s(x)=0\}$)  surface in $\oP^3$. It follows that $b_2(S)=2$ \cite[Prop. III.21]{Beau}, and consequently, the degree of $R_E$ is $2$. Thus $S$ is a quadric and $E\simeq \sO(1)\oplus \sO(1)$ on $\oP^1$.
\par
We can therefore assume that $n=h^0(E)>4$.  $E$ being very ample is equivalent to $\dim H^0(C,E(-x-y))=n-4$  for any $x,y\in C$.  A $\tau$-invariant section, which vanishes at an $x\in C$, must also vanish at $\sigma(x)$.  Therefore, for any $x\in C$, 
$$ V(E)^\tau_x= \{s\in H^0(E)^\tau;\, s(x)=0\}=\{s\in H^0(E);\, s(x)=s(\sigma(x))=0\}^\tau,$$
 has real codimension $4$ in $H^0(E)^\tau$. 
 Let us write $C/\sigma$ for the unoriented real surface obtained as the quotient of $C$ by $\sigma$. Points of 
$C/\sigma$ can be identified with divisors $x+\sigma(x)$ on $C$. We
consider the incidence variety
$$ Y=\{(x+\sigma(x),s)\in C/\sigma\times V(E)^\tau;\; s(x)=s(\sigma(x))=0\},$$ 
and the two projections $p_1,p_2$ onto $C/\sigma$ and onto $V(E)^\tau$, respectively.
The fibres of $p_1$ are the $V(E)^\tau_x$ and have real dimension $n-4$, which implies that $Y$ has dimension $n-2$. The generic fibre of $p_2$ is finite and so $\dim_\oR V(E)^\tau=\dim_\oR Y=n-2$.
\end{proof}
The following simple lemma implies  that $\rho_E$ must be nonnegative:
\begin{lemma} Let $f$ be a real polynomial in $n$ variables. If $f$ changes sign, then $f^{-1}(0)$ has codimension $1$ somewhere.
\end{lemma}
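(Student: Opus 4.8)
The plan is to argue by contradiction, interpreting ``\,$f^{-1}(0)$ has codimension $1$ somewhere'' as the assertion that the real algebraic set $Z=f^{-1}(0)$ has topological dimension $n-1$ at some point. Since $f$ changes sign it is not identically zero, so $Z$ is a proper closed subset of $\oR^n$ of dimension at most $n-1$; the content of the lemma is that this maximal dimension is actually attained somewhere. The first observation is that the sets $U^+=\{x\in\oR^n:\ f(x)>0\}$ and $U^-=\{x\in\oR^n:\ f(x)<0\}$ are open, disjoint, and both nonempty (precisely because $f$ changes sign), and that their union is exactly $\oR^n\setminus Z$. Hence $\oR^n\setminus Z$ is disconnected.

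The second, and decisive, step is the topological non-separation principle: a closed subset of $\oR^n$ of topological codimension at least $2$ does not separate $\oR^n$, i.e.\ its complement remains connected. Granting this, suppose for contradiction that $Z$ has codimension $\geq 2$ at \emph{every} one of its points, so that $\dim Z\leq n-2$. Then $\oR^n\setminus Z$ would be connected, contradicting the disconnection exhibited above. Therefore $Z$ must have codimension $1$ at some point, which is the claim.

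The main obstacle is justifying the non-separation principle; everything else is immediate. I would handle it by exploiting that $Z$, being a real algebraic (hence semialgebraic) set, admits a semialgebraic triangulation: a subcomplex of dimension $\leq n-2$ cannot disconnect the $n$-dimensional ambient space, since any continuous path joining a point of $U^-$ to a point of $U^+$ can be put into general position so as to miss a set of dimension $\leq n-2$. (Alternatively one may simply cite the corresponding result from classical dimension theory, that a closed set of topological dimension $\leq n-2$ does not separate $\oR^n$.) If a genuinely smooth hypersurface point of $Z$ is wanted rather than just a point of codimension $1$, the same general-position idea upgrades the conclusion: choosing $a\in U^-$ and $b\in U^+$ and perturbing the segment $[a,b]$ away from the lower-dimensional locus $\{f=0,\ \nabla f=0\}$ produces a \emph{simple} zero of $f$ along the segment, at which $\nabla f\neq 0$ and the implicit function theorem presents $Z$ locally as a smooth graph of codimension $1$.
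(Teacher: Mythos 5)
Your proof is correct, but it takes a genuinely different route from the paper's. The paper argues directly and with nothing beyond one-variable calculus: after a linear change of variables there are $a\in\oR^{n-1}$ and $y_1,y_2\in\oR$ with $f(a,y_1)<0<f(a,y_2)$; by continuity these inequalities persist for all $x$ in a neighbourhood $U_a$ of $a$, and the intermediate value theorem applied fibrewise to $F_x(t)=f(x,t)$ produces a zero $y_x$ on every vertical line over $U_a$. Thus $f^{-1}(0)$ surjects onto the $(n-1)$-dimensional open set $U_a$, exhibiting \emph{explicitly where} the codimension-one behaviour occurs. You instead run a proof by contradiction through the non-separation principle: $U^+$ and $U^-$ disconnect $\oR^n\setminus Z$, while a closed set of topological dimension $\leq n-2$ cannot separate $\oR^n$. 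That principle is the real content of your argument and is genuinely nontrivial --- either cited from classical dimension theory (Hurewicz--Wallman) or, as you propose, made elementary for the semialgebraic set $Z$ by triangulating $Z$ intersected with a large closed ball containing both base points and putting a path in PL general position (possible since $1+(n-2)<n$), the contradiction then coming from the intermediate value theorem along the avoiding path. What your approach buys is generality and a stronger conclusion: it applies to any closed set whose complement is disconnected, and your final remark upgrades the output to a smooth hypersurface point. One small caveat there: the critical locus $\{f=0,\ \nabla f=0\}$ need not have dimension $\leq n-2$ when $f$ has repeated factors (e.g.\ $f=x_1^2(x_2-1)$, whose critical zero locus contains the hyperplane $x_1=0$); for the upgrade you should first replace $f$ by its squarefree part, which has the same zero set and still changes sign, after which the codimension-two bound on the critical zero locus is valid. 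The paper's argument, by contrast, is shorter, self-contained, and constructive, which suits its role as an auxiliary step; your version is the more conceptual statement of why sign changes force a topological hypersurface.
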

\begin{proof}
 After a linear change of variables, we can assume that there are $a\in \oR^{n-1}$, $y_1,y_2\in \oR$, such that $f(a,y_1)<0<f(a,y_2)$. Therefore, for any $x$ in a small neighbourhood $U_a$ of $a$, we have $f(x,y_1)<0<f(x,y_2)$, and the mean value theorem applied to $F_x(t)=f(x,t)$ shows that, for any $x\in U_a$, there exists a $y_x\in \oR$ such that $f(x,y_x)=0$.
\end{proof}
The statement about the Hessian of $\rho_E$ follows from the following two facts:
\begin{itemize}
 \item[1.] The projective dual of $\oP V(E)$ is $\oP(E^\ast)$ embedded in $\oP(W^\ast)$, $W=H^0(C,E)$, via the projective embedding defined by the line bundle $\sO(1)$ on $\oP(E^\ast)$. This is shown in \cite{GKZ}, Theorem 3.11 in Chapter 3.
\item[2.] Segre's dimension formula \cite{Segre}.
\end{itemize}
These two facts imply the corresponding statement for the Hessian of $R_E$.
The formulation in Remark \ref{remarkii} makes clear that the statement remains true for $\rho_E$.

\subsection{The variety $V^{[2]}(E)$}
In order to prove statement (iii) of Theorem \ref{one}, we shall consider  the variety $V^{[2]}(E)\subset V(E)$ of sections which have at least $2$ zeros (counting multiplicities).

\begin{lemma} Let $E$ be a very ample vector bundle of rank $2$ on a smooth connected projective curve $C$. The variety $ V^{[2]}(E)$ is irreducible. It has codimension $2$   unless $E$ is $\sO(1)\oplus \sO(1)$ on $\oP^1$. Moreover,  $\oP V^{[2]}(E)$ is the singular locus of $\oP V(E)$.
\end{lemma}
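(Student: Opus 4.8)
The plan is to handle the three assertions in turn by passing, in each case, to an incidence variety that resolves the relevant degeneracy, and then reading off irreducibility, dimension, and the singular locus from the resolution.

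For irreducibility and dimension I would work over the Hilbert scheme $C^{[2]}$ of length-two subschemes of $C$, which, $C$ being a smooth curve, is the smooth irreducible surface $C^{(2)}$. Write $W=H^0(C,E)$ and $n=h^0(E)$, and set
$$Z=\{(D,s)\in C^{[2]}\times W:\ s\in H^0(E(-D))\},$$
with projections $p_1,p_2$. The very ampleness hypothesis, in the form $h^0(E(-x-y))=n-4$ recorded in the proof of Lemma \ref{codim=2} (together with its evident extension to non-reduced $D=2x$), says precisely that every fibre of $p_1$ has dimension $n-4$; by Grauert's theorem the corresponding pushforward is locally free, so $Z$ is the total space of a rank $n-4$ vector bundle over $C^{[2]}$, hence smooth and irreducible of dimension $n-2$. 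Since $p_2(Z)=V^{[2]}(E)$, irreducibility follows. For the dimension I would note that a nonzero section has a zero scheme of finite length, which contains only finitely many length-two subschemes; thus $p_2$ has finite fibres over $V^{[2]}(E)\setminus\{0\}$, is generically finite, and $\dim V^{[2]}(E)=n-2$. This needs the fibres $H^0(E(-D))$ to contain nonzero sections, i.e. $n\geq5$; the case $n=4$ forces $E\cong\sO(1)\oplus\sO(1)$ on $\oP^1$ by the first part of the proof of Lemma \ref{codim=2}, which is the stated exception.

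For the singular locus I would use the resolution $\pi\colon\tilde V\to V(E)$, where $\tilde V=\{(x,s)\in C\times W:\ s(x)=0\}$ is a rank $n-2$ vector bundle over $C$, smooth of dimension $n-1$, and $\pi$ is the projection to $W$, which is birational since the generic section of $V(E)$ has a single simple zero. To obtain $\operatorname{Sing}\oP V(E)\subseteq\oP V^{[2]}(E)$, take $s$ with one simple zero $x$; trivialising $E$ near $x$ and solving for the zero by the implicit function theorem, one computes
$$dR_E(s)(\dot s)\ \propto\ s'(x)\wedge\ev_x(\dot s)\ \in\ {\textstyle\bigwedge}^2E_x,$$
where $s'(x)\in E_x$ is the nonzero derivative of $s$ at its zero. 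As $\ev_x\colon W\to E_x$ is surjective this functional is nonzero, so $R_E$ has nonvanishing differential and $\oP V(E)$ is smooth at $[s]$.

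For the reverse inclusion I would treat the generic stratum of $V^{[2]}(E)$: a section $s$ with two distinct simple zeros $x\neq y$ lies on the two local branches $B_x,B_y$ of $V(E)$ consisting of sections with a zero near $x$, resp. near $y$, which by the computation above are smooth with conormals $s'(x)\wedge\ev_x(-)$ and $s'(y)\wedge\ev_y(-)$; these are distinct because $W\to E_x\oplus E_y$ is surjective, so the local equation of $V(E)$ is divisible by a product of two distinct linear forms, forcing $dR_E(s)=0$ and $[s]\in\operatorname{Sing}\oP V(E)$. Since $\operatorname{Sing}\oP V(E)$ is closed and $\oP V^{[2]}(E)$ is irreducible, it then suffices to show that this two-distinct-simple-zeros locus is dense in $V^{[2]}(E)$, which I would extract from the incidence picture: the generic point of $V^{[2]}(E)$ comes from a generic pair $x,y$ and a generic $s\in H^0(E(-x-y))$, and the sections with a non-simple zero at $x$, resp. $y$, form the subspaces $H^0(E(-2x-y))$, resp. $H^0(E(-x-2y))$. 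The main obstacle is showing these are \emph{proper} subspaces for generic $x,y$; concretely, one must rule out that $y$ is a base point of $E(-2x)$ (and symmetrically), so that the jet map $H^0(E(-x-y))\to E_x$, $s\mapsto s'(x)$, is nonzero. Controlling these base loci — equivalently, understanding which ruling lines of the embedded ruled surface $\oP(E^\ast)\subset\oP(W^\ast)$ are forced into a hyperplane through the two fibres over $x$ and $y$ — is the delicate point that must be settled to separate the generic behaviour from the excluded bundle $\sO(1)\oplus\sO(1)$.
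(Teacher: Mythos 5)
Your treatment of the first two assertions coincides with the paper's: the paper uses exactly the same incidence variety over $C_2=C^{(2)}$, observes that it is the total space of a vector bundle over an irreducible base (without spelling out Grauert/local freeness as you do), and gets irreducibility and $\dim V^{[2]}(E)=n-2$ by generic finiteness of the second projection, with the case $h^0(E)=4$ excluded via the first paragraph of the proof of Lemma \ref{codim=2}. Your proof of $\operatorname{Sing}\oP V(E)\subseteq \oP V^{[2]}(E)$ is also the same in spirit: the paper deduces it from the fact that the projection of $\tilde Y=\{(x,[s]):s(x)=0\}$ onto $\oP V(E)$ is ramified exactly over $\oP V^{[2]}(E)$, which is your differential computation in coordinate-free form. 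Where you genuinely diverge is the reverse inclusion. The paper invokes projective duality: by \cite[Ch.~3, Thm.~3.11]{GKZ}, $\oP V(E)$ is the dual variety of $X=\oP(E^\ast)\subset\oP(W^\ast)$, and the reflexivity characterization of smooth points of a dual hypersurface (unique tangency point, nondegenerate contact) makes \emph{every} bitangent or second-order-tangent hyperplane a singular point of the dual. This handles all of $\oP V^{[2]}(E)$ at once — including the double-zero stratum — with no genericity statement needed. Your two-branch argument only covers sections with two distinct \emph{reduced} zeros, and then relies on density of that stratum in $V^{[2]}(E)$ together with closedness of the singular locus. That density is exactly your acknowledged ``delicate point,'' and as written your proof of $\oP V^{[2]}(E)\subseteq\operatorname{Sing}\oP V(E)$ is incomplete.

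The gap is real but fillable from data you already have, namely $h^0(E(-D))=n-4$ for \emph{every} effective divisor $D$ of degree $2$, including $D=2x$. The loci $A=\{(x,y):h^0(E(-2x-y))\geq n-4\}$ and its mirror $B$ are closed in $C\times C$ by semicontinuity. If for generic $(x,y)$ no section had reduced zeros at both points, then $H^0(E(-x-y))$, being a vector space and hence not a union of two proper subspaces, would equal $H^0(E(-2x-y))$ or $H^0(E(-x-2y))$; so $A\cup B=C\times C$, and by irreducibility one of them, say $A$, is everything. Then for all $(x,y)$ one has $H^0(E(-x-y))=H^0(E(-2x-y))\subseteq H^0(E(-2x))$, and since both outer spaces have dimension $n-4$ (very ampleness applied to $x+y$ and to $2x$), in fact $H^0(E(-x-y))=H^0(E(-2x))$ for every $y$. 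The right-hand side is independent of $y$, so every section of $H^0(E(-2x))$ vanishes at every point of $C$, forcing $n-4=0$ — the excluded bundle $\sO(1)\oplus\sO(1)$ on $\oP^1$. Combined with the observation that the diagonal stratum of your $Z$ has dimension $n-3<n-2$, this makes the two-distinct-simple-zeros locus dense in $V^{[2]}(E)$ and completes your route; note this is weaker than, and independent of, the paper's separate hypothesis that $E$ have a section with \emph{exactly} two zeros, which is needed only later for the degree computation. The trade-off between the two approaches: yours is elementary and self-contained modulo this lemma, while the paper's duality argument buys uniformity — singularity at every point of $\oP V^{[2]}(E)$, degenerate tangencies included, without any density argument. (One last pedantic point: your step from smoothness of the parametrized branch to $dR_E(s)\neq 0$ uses that $R_E$ is reduced; that is fine here since $R_E$ is irreducible by definition, but it deserves a word.)
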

\begin{proof}
As in the proof of Lemma \ref{codim=2}, we can assume that $h^0(E)>4$. This time we consider the incidence variety 
$$Y=\{(D,s)\in C_2\times V^{[2]}(E);\; s(D)=0\},$$
where $C_2=C^2/\oZ_2$. Let  $p_1:Y\to C_2$ and $p_2:Y\to V^{[2]}(E)$ be the two projections. $Y$ is the total space of a vector bundle over an irreducible variety, hence irreducible. Therefore $V^{[2]}(E)$ is irreducible and of codimension $2$ (cf. the proof of Lemma \ref{codim=2}).
\par
To show that $\oP V^{[2]}(E)$ is the singular locus of $\oP V(E)$, consider the incidence variety
$$\tilde Y=\{(x,[s])\in C\times \oP V(E);\; s(x)=0\}.$$
The projection $p_2$ onto $\oP V(E)$ is ramified over $\oP V^{[2]}(E)$, which implies the inclusion $\oP V(E)_{\rm sing}\subset\oP V^{[2]}(E)  $. On the other hand, as mentioned at the end of the last subsection,  $\oP V(E)$ is the  projective dual of $X=\oP(E^\ast)$ embedded in $\oP(W^\ast)$, $W=H^0(C,E)$, via the projective embedding defined by the line bundle $\sO(1)$ on $\oP(E^\ast)$. Thus, the hyperplane $H$ corresponding to $s\in H^0(C,E)\simeq  H^0(X,\sO(1))$ is tangent to $X$ at two points (or has a tangency point of order $2$) if and only if $s\in  V^{[2]}(E)$. Therefore $\oP V^{[2]}(E)\subset \oP V(E)_{\rm sing}$.
\end{proof}

\begin{lemma} Let $E$ be a very ample rank $2$ $\sigma$-bundle on a real smooth connected projective curve $C$ with no real points.  If $\rho_E$ is a sum of squares of polynomials, then $\deg V^{[2]}(E)< (\deg E)^2/4$.
\end{lemma}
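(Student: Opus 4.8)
The plan is to complexify the sum-of-squares identity, turning it into the statement that $V^{[2]}(E)$ is cut out set-theoretically by forms of degree $d/2$, and then to bound $\deg V^{[2]}(E)$ by B\'ezout, extracting the strict inequality from the irreducibility of $R_E$. For the complexification, note that $H^0(C,E)^\tau$ is a real form of $W=H^0(C,E)$, hence Zariski dense, so the complexification of $\rho_E$ equals $c\,R_E$ for some $c\neq 0$. Since $d=\deg E$ is even and $\rho_E$ is homogeneous, a sum-of-squares representation may be taken in the form $\rho_E=\sum_i q_i^2$ with $q_i$ real forms of degree $d/2$; complexifying yields
\[
c\,R_E=\sum_{i=1}^{k}p_i^{2},\qquad p_i:=q_i^{\mathbb C},\quad \deg p_i=\tfrac d2 .
\]
Let $L=\langle p_1,\dots,p_k\rangle$ and $r=\dim_{\mathbb C}L$. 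Expressing the $p_i$ in a basis of $L$ and diagonalizing the resulting positive definite Gram form presents $c\,R_E$ as a sum of exactly $r$ squares of linearly independent forms of degree $d/2$. If $r\le 2$ this writes $R_E$ as $\ell_1^2$ or as $(\ell_1+i\ell_2)(\ell_1-i\ell_2)$, that is, as a square or a product of two nonconstant forms, contradicting the irreducibility of $R_E$; therefore $r\ge 3$.

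Next I would show that each $p_i$ vanishes on $V^{[2]}(E)$. Since a finite sum of real squares vanishes only when every summand does, each $q_i$ vanishes on the real zero set of $\rho_E$. As $\sigma$ has no fixed points, a $\tau$-invariant section with a zero also vanishes at the conjugate point, so $V(E)^\tau=V^{[2]}(E)^\tau$; by Lemma~\ref{codim=2} this real locus has real dimension equal to $\dim_{\mathbb C}V^{[2]}(E)$. At its generic (smooth) points it is thus a maximal totally real submanifold of the irreducible variety $V^{[2]}(E)$, so the holomorphic functions $p_i$ that vanish there must vanish on all of $V^{[2]}(E)$; that is, $p_i\in I(V^{[2]}(E))$ for every $i$.

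For the degree bound, the common zero locus of the $p_i$ is exactly $V^{[2]}(E)$: the inclusion $\supseteq$ follows from $p_i\in I(V^{[2]}(E))$, and $\subseteq$ from the fact that both $R_E$ and $\nabla R_E=\tfrac2c\sum_i p_i\nabla p_i$ vanish where all $p_i$ do, placing that locus inside $\operatorname{Sing}V(E)=V^{[2]}(E)$. In particular $L$ has no fixed divisorial component, so two general members $f,g\in L$ meet properly in codimension $2$ with $V^{[2]}(E)$ among the components, and B\'ezout gives
\[
i\bigl(V^{[2]}(E);f,g\bigr)\cdot\deg V^{[2]}(E)\ \le\ \deg f\cdot\deg g=\tfrac{d^2}{4},
\]
whence $\deg V^{[2]}(E)\le d^2/4$. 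The essential point is to exclude equality. If it held, the cycle $Z(f)\cdot Z(g)$ would consist of $V^{[2]}(E)$ alone with multiplicity one; being a complete intersection it is Cohen--Macaulay with no embedded components, hence reduced, with saturated ideal $\langle f,g\rangle$. Then $I(V^{[2]}(E))_{d/2}=\mathbb C f+\mathbb C g$, and since each $p_i$ lies in $I(V^{[2]}(E))_{d/2}$ we would get $L\subseteq\mathbb C f+\mathbb C g$, i.e. $r\le 2$, contradicting $r\ge 3$. Hence $\deg V^{[2]}(E)<d^2/4$.

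I expect the equality-case analysis to be the main obstacle: one must justify that the intersection is proper for general $f,g$, that total multiplicity $d^2/4$ together with a codimension-$2$ component of degree $d^2/4$ forces multiplicity one and no residual components, and that the resulting reduced complete intersection has degree-$(d/2)$ graded ideal piece exactly $\mathbb C f+\mathbb C g$. The remaining ingredients — the complexification, the density of the real locus, and the lower bound $r\ge 3$ from irreducibility — are comparatively routine.
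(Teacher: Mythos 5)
Your proposal is correct and follows essentially the same route as the paper: complexify the sum-of-squares identity, identify the common zero locus of the $p_i$ with $V^{[2]}(E)$ using the singular-locus description of $\oP V(E)$ together with the codimension-$2$ real locus from Lemma~\ref{codim=2}, bound $\deg V^{[2]}(E)$ by B\'ezout with forms of degree $d/2$, and derive strictness from $r\geq 3$ via irreducibility of $R_E$ — your equality-case analysis (reduced complete intersection, saturated ideal, graded piece $\C f+\C g$) merely fills in details the paper asserts in one line. The only omission is the degenerate case $E=\sO(1)\oplus\sO(1)$ on $\oP^1$, where Lemma~\ref{codim=2} and the irreducible-codimension-$2$ statement for $V^{[2]}(E)$ do not apply; the paper excludes it at the outset since the inequality is trivially true there, and you should do the same.
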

\begin{proof} The inequality is clearly true for $E=\sO(1)\oplus \sO(1)$ on $\oP^1$, so we exclude this case. 
 Suppose that $\rho_E=\sum_{i=1}^m p_i^2$ for real polynomials $p_i(x)$ ($p_i$ is not a constant multiple of $p_j$ for $i\neq j$). Then the $E$-resultant is of the same form: $R_E(z)=\sum_{i=1}^m p_i^2(z)$. From the previous lemma we can conclude that the subset $P$ of $H^0(C,E)$ defined by $p_i(z)=0$, $i=1,\dots,m$, is contained in $V^{[2]}(E)$.
On the other hand, the variety $V(E)^\tau$ is given by the equations $p_i(x)=0$, $i=1,\dots,m$ and it has codimension $2$, owing to Lemma \ref{codim=2}.  Therefore $P$ has  codimension at most $2$ and, as $V^{[2]}(E)$ is irreducible, $P=V^{[2]}(E)$. The degree of each $p_i$ is at most $d/2$, and since $V^{[2]}(E)$ has codimension $2$, it follows that its degree is at most $(d/2)^2$. Moreover, the irreducibility of $V(E)$ implies that $m\geq 3$, so the inequality is strict.
\end{proof}

Theorem \ref{one}(iii) will now follow from: 
\begin{proposition}
 Let $C$ be a curve of genus $g$ and $E$ a rank $2$ very ample vector bundle on $C$. Let $d=\deg E$ and suppose that $E$ has a section with exactly $2$ zeros. Then the degree of $V^{[2]}(E)$ is $\frac{d(d-3)}{2}+1-g$.\label{degree}
\end{proposition}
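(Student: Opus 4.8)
The plan is to compute the degree by cutting $\oP V^{[2]}(E)$ with a generic $\oP^2$ and reinterpreting the intersection points as nodes of a plane curve. Since $\oP V^{[2]}(E)$ has codimension $2$ in $\oP(W)$, $W=H^0(C,E)$ (by the previous lemma), its degree is the number of points of $\oP V^{[2]}(E)\cap \oP(\Lambda)$ for a generic $3$-dimensional subspace $\Lambda\subset W$, i.e. the number of $[s]\in\oP(\Lambda)$ such that $s$ has at least two zeros. So I first attach to $\Lambda$ a morphism to $\oP^2$. As $E$ is very ample, for generic $\Lambda$ the evaluation map $\Lambda\otimes\sO_C\to E$ is a surjection of sheaves, whose kernel is a line bundle $L$; from $0\to L\to \Lambda\otimes\sO_C\to E\to 0$ one reads off $c_1(L)=-c_1(E)$, hence $\deg L=-d$. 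The inclusion $L\hookrightarrow \Lambda\otimes\sO_C$ defines a morphism $\psi\colon C\to\oP(\Lambda)\cong\oP^2$ with $\psi^{\ast}\sO(1)=L^{\vee}=:M$ of degree $d$; concretely $\psi(x)$ is the point $[s_x]$, where $s_x\in\Lambda$ is the unique section vanishing at $x$.

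The next step is to match $\oP V^{[2]}(E)\cap\oP(\Lambda)$ with the nodes of the image curve $\psi(C)$. A section $s\in\Lambda$ vanishing at two distinct points $x\neq y$ is precisely a section with $\psi(x)=\psi(y)=[s]$, that is, a node of $\psi(C)$. The role of the hypothesis that $E$ has a section with exactly two zeros is to guarantee that the generic member of $V^{[2]}(E)$ has exactly two simple zeros; equivalently, the projection $p_2\colon Y\to V^{[2]}(E)$ of the previous lemma is birational. Combining this with two dimension counts, namely that the locus of sections with a zero of multiplicity $\geq 2$ at a point has codimension $3$ in $W$ (for fixed $x$ the condition is $s\in H^0(E(-2x))$, which is $4$ conditions, and $x$ varies in a curve), and that $V^{[3]}(E)$ also has codimension $3$, I will conclude that a generic $\oP(\Lambda)$ meets neither locus. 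Hence every point of $\oP V^{[2]}(E)\cap\oP(\Lambda)$ is a section with exactly two distinct simple zeros, coming from a single pair $\{x,y\}$, so these points are in bijection with the nodes of $\psi(C)$.

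Granting this, $\psi\colon C\to\psi(C)$ is birational onto an irreducible plane curve of degree $\deg M=d$ whose only singularities are ordinary nodes, and whose normalization is $C$, so its geometric genus is $g$. The genus formula for a nodal plane curve then gives, for the number $\delta$ of nodes,
\[
g=\frac{(d-1)(d-2)}{2}-\delta,
\]
whence
\[
\deg V^{[2]}(E)=\delta=\frac{(d-1)(d-2)}{2}-g=\frac{d(d-3)}{2}+1-g,
\]
as claimed.

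The main obstacle is the general-position step: proving that for a generic net $\Lambda$ the morphism $\psi$ is indeed birational onto its image and that the image acquires only ordinary nodes. This is exactly where very ampleness of $E$ and the exactly-two-zeros hypothesis are needed — they exclude cusps (arising from double zeros), tacnodes, and higher multiple points, each of which corresponds to a positive-codimension degeneracy avoided by a generic $\Lambda$, and they ensure that each node is the image of a single pair $\{x,y\}$ so that no intersection point is overcounted. Once these transversality facts are in place, the degree is a clean consequence of the plane-curve genus formula; I expect the bookkeeping of the excluded loci to be the only delicate part.
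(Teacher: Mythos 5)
Your proposal is correct in outline but takes a genuinely different route from the paper. The paper never cuts with a linear space: it identifies $V^{[2]}(E)$ with $V(E^{[2]})$ for the tautological rank-$4$ bundle $E^{[2]}=p_\ast q^\ast E$ on the symmetric square $C_2$ (the exactly-two-zeros hypothesis entering precisely to guarantee that a generic section in $V(E^{[2]})$ has a single zero), then quotes Geertsen's degeneracy-locus theorem to get $\deg V^{[2]}(E)=c_2(E^{[2]})[C_2]$, and evaluates $c_2$ from Mattuck's Chern-character formula on $C_2$. That argument is short, needs no general-position analysis, and gives the answer for arbitrary rank at no extra cost. Your argument is more elementary and more geometric: it exhibits the degree as the classical count $\frac{(d-1)(d-2)}{2}-g$ of nodes of a generic degree-$d$ plane model of $C$, with only the plane-curve genus formula as cohomological input --- but it shifts all the difficulty into the transversality bookkeeping you flag at the end, which the paper's approach absorbs into the cited theorems.

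Two caveats on that bookkeeping, both repairable. First, your dimension count does not literally extend to $V^{[3]}(E)$: getting six conditions from a fixed degree-$3$ divisor $D$ would require surjectivity of $H^0(C,E)\to H^0(C,E\otimes\sO_D)$, i.e.\ $2$-very ampleness, which is not assumed (the paper's remark presents it only as a sufficient condition for the exactly-two-zeros hypothesis). What you actually need, codimension $\geq 3$, follows instead from the previous lemma: $V^{[2]}(E)$ is irreducible of codimension $2$, and the exactly-two-zeros hypothesis makes the closed subset $V^{[3]}(E)$ a proper subvariety of it. (Your count for the double-zero locus is fine, since very ampleness does give $h^0(E(-2x))=h^0(E)-4$.) Second, excluding tacnodes is not a codimension count on sections alone: the tangent lines of the two branches at a node depend on $\Lambda$, not just on $s$, so you need an incidence argument over the Grassmannian --- e.g.\ for fixed $s$ with simple zeros $x,y$, the tangent line of the branch through $x$ is cut out by the condition $a(x)\in\C\, s'(x)$ on $a\in\Lambda$, and surjectivity of $H^0(C,E)\to E_x\oplus E_y$ (very ampleness again) shows that the $\Lambda\ni s$ producing coincident tangents form a divisorial Schubert-type locus; summing over the $(n-3)$-dimensional $\oP V^{[2]}(E)$ gives positive codimension in $\Gr(3,W)$, so a generic net has only ordinary nodes. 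With these repairs your proof closes.
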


In order to prove this, we need to discuss certain natural vector bundles on symmetric powers of $C$.

\subsection{Tautological vector bundles on symmetric products and $\deg V^{[2]}(E)$}

Let $E$ be a vector bundle of rank $r$ on an algebraic curve  $C$. For any $k\geq 1$ one obtains a vector bundle $E^{[k]}$ of rank $kr$ on the $k$-th symmetric product 
$C_k=C^k/\Sigma_k$. Consider
$$ C\stackrel{q}{\longleftarrow} C\times C_{k-1} \stackrel{p}{\longrightarrow} C_k,$$
where $q$ is the projection and $p$ is the symmetrisation map. Then $E^{[k]}=p_\ast q^\ast E$. This construction goes back to Schwarzenberger \cite{Sch}. Mattuck \cite{Mat}  has shown that $E^{[k]}$ can be also constructed as the $\Sigma_k$-invariant part of $f_\ast (q_1^\ast E\oplus\dots \oplus q_k^\ast E)$, where $q_i:C^k\to C$ is the projection onto the $i$-th factor and $f:C^k\to C_k$ is the symmetrisation. The bundle $E^{[k]}$ was called {\em symmetrization of $E$} by Mattuck, but the term ``{\em tautological bundle}`` seems to have been accepted in more recent literature.
\par
The fibre of $E^{[k]}$ at $D\in C_k$ is $H^0(C,E\otimes \sO_D)$. 
The K\"unneth formula implies easily that $H^0(C_k, E^{[k]})\simeq H^0(C,E)$, and so we have the natural evaluation map
\begin{equation}
 H^0(C,E)\to E^{[k]}, \label{eval}
\end{equation}
which is surjective at $D\in C_k$ if and only if the evaluation morphism 
\begin{equation} H^0(C,E)\to H^0(C,E\otimes\sO_D)
\label{evD}
\end{equation}
is surjective.
In particular, if $E$ is very ample, then $E^{[2]}$ is  globally generated. Moreover, the isomorphism $H^0(C_2, E^{[2]})\simeq H^0(C,E)$ induces an isomorphism $V^{[2]}(E)\simeq V(E^{[2]})$ with the variety of sections of $E^{[2]}$ vanishing somewhere. The assumption on $E$ having a section vanishing at exactly two points is equivalent to generic section in $V(E^{[2]})$ having exactly one zero. In this case, the results of Geertsen \cite[Formula 11]{Geer}  imply that the degree of $V^{[2]}(E)\simeq V(E^{[2]})$ is $c_2(E^{[2]})[C_2]$.
\par 
 The Chern character of the tautological bundle $E^{[k]}$ has been computed by Mattuck \cite{Mat}\footnote{Mattuck only considers line bundles $E$, but, owing to a standard argument, this is sufficient to compute the Chern character for higher rank bundles.}, and we simply apply his results (see also \cite[\S VIII.2]{ACGH}). Let $\delta\in H^2(C_2,\oQ)$ be the Poincar\'e dual of the image of the diagonal $\{(p,p)\}$ in $C_2$ (i.e. of $\{2p\}\subset C_2$). Let $x\in H^2(C_2,\oQ)$ be the Poincar\'e dual of $p+C$, and $1$ the Poincar\'e dual of $[C_2]$. These satisfy the following relations:
$$ x^2=1,\quad x\delta=2,\quad \delta^2=4(1-g).$$
The Chern character of $E^{[2]}$, where $\rank E =r$ and $\deg(E)=d$, is 
$$ {\rm ch}(E^{[2]})=d(1-e^{-x})-r(g-1)+ r\left((g+1)(1+x)-\frac{\delta}{2}\right)e^{-x}.$$
Hence:
\begin{equation}
 c_1(E^{[2]})=dx-r\frac{\delta}{2},\quad c_2(E^{[2]})=\frac{d(d+1-2r)}{2}-\frac{r(r-1)}{2}(g-1).\label{Chern} 
\end{equation}
Thus, for a rank $2$ bundle $E$ of degree $d$, the degree of the variety $ V^{[2]}(E)$ is $\frac{d(d-3)}{2}+1-g$, which proves Proposition \ref{degree} and, consequently, Theorem \ref{one}.

\section{Examples}

We discuss the case of $C=\oP^1$, equipped with the antipodal map $\sigma$. If $z$ is the affine coordinate on $\oP^1$, then $\sigma(z)=-1/\ol{z}$. The involution $\sigma$ induces an anti-linear map $\tau$ on $\sO(k)$, which squares to $1$ if $k$ is even, and to $-1$ if $k$ is odd. For $k\geq 0$, its effect on sections,  i.e. on polynomials of degree $k$, is:
\begin{equation}
 \tau \left(\sum_{i=0}^k c_i z^i\right)= \sum_{i=0}^k (-1)^{i+1}\ol{c_{k-i}} z^i.\label{tau}
\end{equation}
It follows that the rank $2$ $\sigma$-bundles on $\oP^1$ are $\sO(2k)\oplus \sO(2l)$ with diagonal action of $\tau$, or $\sO(2k+1)\oplus \sO(2k+1)$ with $\tau(u,v)=(\tau(v),-\tau(u))$. The polynomial $R_E$ is the classical resultant $R(f,g)$ of two polynomials. We compute the real polynomial $\rho_E$ for $E=\sO(i)\oplus \sO(i)$, $i=1,2,3$:

\subsection{ $E=\sO(1)\oplus\sO(1)$}  In this case, a real section of $E$ is a pair $f=a_0+a_1 z$, $g=\ol{a_1}-\ol{a_0}z$. The resultant $R(f,g)$ is $|a_1|^2+|a_0|^2$, which is the quadratic polynomial $x_1^2+x_2^2+x_3^2+x_4^2$ on $\oR^4$.

\subsection{$E=\sO(2)\oplus\sO(2)$} Real sections of $E$ are pairs $f=a+rz-\bar{a}z^2,g=b+sz-\bar{b}z^2$, $a,b\in \oC$, $r,s\in \oR$, and the classical resultant of two such polynomials is
$$ R(f,g)=|as-br|^2-(a\bar b -\bar a b)^2.$$
After a linear change of coordinates, we can write this as the polynomial 
$$ \rho_E(x_1,x_2,x_3,y_1,y_2,y_3)=(x_1y_2-x_2y_1)^2+(x_1y_3-x_3y_1)^2+(x_2y_3-x_3y_2)^2$$
on $\oR^6$.

\subsection{$E=\sO(3)\oplus \sO(3)$} This is the first case where Theorem \ref{one} implies that $\rho_E$ is not a sum of squares of polynomials.
\par
Sections of $E$ are pairs of polynomials $f=\sum_{i=0}^3 a_iz^i,g=\sum_{i=0}^3 b_iz^i$ of degree $3$ and  $R_E=R(f,g)$ can be computed from the Cayley-Bezout formula \cite[\S 12.1]{GKZ}:
\begin{equation}
 R(f,g)=\det\begin{pmatrix}
             [3,0] & [3,1] & [3,2]\\ [2,0] & [3,0]+[2,1] & [3,1]\\ [1,0] & [2,0] & [3,0]
            \end{pmatrix},
\end{equation}
where $[i,j]=a_ib_j-b_ia_j$. We can identify $H^0(C,E)^\tau$ with pairs 
$$f=a_0+a_1z+a_2z^2+a_3z^3,\quad g= \ol{a_3}-\ol{a_2}z+\ol{a_1}z^2-\ol{a_0}z^3.$$
 It follows that on  $H^0(C,E)^\tau$ we have
$$ [3,0]=|a_3|^2+|a_0|^2,\quad [2,1]=-|a_2|^2-|a_1|^2,$$ $$ [3,2]=\ol{[1,0]}=a_3\ol{a_1}+a_2\ol{a_0},\quad 
 [3,1]=-\ol{[2,0]}=-a_3\ol{a_2}+a_1\ol{a_0}.
$$
Writing 
$$r=[3,0],\quad s=-[2,1],\quad u=[3,1],\quad v=[3,2],$$
 we obtain
\begin{equation}
 \rho_E(f)=R(f,\sigma(f))=r^2(r-s)+2r|u|^2-(r-s)|v|^2 +u^2\bar{v}+\bar{u}^2v.
\end{equation}
It is a polynomial of degree $6$ in $8$ real variables, with $224$ terms.
Using the identity $|u|^2+|v|^2=rs$, we can eliminate $s$ (if $r\neq 0$), and  rewrite $\rho_E$ in the following (nonpolynomial) form, which makes its nonnegativity obvious:
\begin{equation}
  \rho_E(f)= \frac{1}{r}\left((r^2-|v|^2)^2+|r\bar u+u\bar v|^2\right).\label{rho3}
\end{equation} 
This also shows  that the vanishing of $\rho_E$ implies the equality $r\bar u+u\bar v=0$, i.e. 
$$  (|a_0|^2+|a_3|^2)(a_0\ol{a_1}-a_2\ol{a_3})=(a_3\ol{a_2}-a_1\ol{a_0})(a_0\ol{a_2}+a_1\ol{a_3}).$$
The reverse implication is valid provided $u\neq 0$.

\end{document}